\newcommand{\Pj}{\mathbb{P}}
\newcommand{\F}{\mathbb{F}}
\newcommand{\Z}{\mathbb{Z}}
\newcommand{\Fl}{\mathbb{F}\/_\ell}
\newcommand{\Fq}{\mathbb{F}\/_q}
\newcommand{\Fqd}{\mathbb{F}\/_{q^d}}
\newcommand{\Fp}{\mathbb{F}\/_p}
\DeclareMathOperator{\Gal}{Gal}
\DeclareMathOperator{\monic}{monic}
\DeclareMathOperator{\res}{res}
\tikzset{commutative diagrams/diagrams={row sep=large,column sep=large}}
\title{Constructing Permutation Rational Functions From Isogenies}
\author{%
Gaetan Bisson%
\thanks{University of French Polynesia (\email{bisson@gaati.org})}%
\and
Mehdi Tibouchi%
\thanks{NTT Secure Platform Laboratories (\email{tibouchi.mehdi@lab.ntt.co.jp})}%
}
\begin{document}

\maketitle

\begin{abstract}
A \emph{permutation rational function} $f\in \Fq(x)$ is a rational
function that induces a bijection on $\Fq$, that is, for all $y\in\Fq$
there exists exactly one $x\in\Fq$ such that $f(x)=y$. Permutation
rational functions are intimately related to exceptional rational
functions, and more generally exceptional covers of the projective line,
of which they form the first important example.

\smallskip
In this paper, we show how to efficiently generate many permutation
rational functions over large finite fields using isogenies of elliptic
curves, and discuss some cryptographic applications. Our algorithm is based on Fried's modular interpretation of
certain dihedral exceptional covers of the projective line
(\emph{Cont. Math.}, 1994).
\end{abstract}

\begin{keywords}
Permutation rational functions, Exceptional covers, Isogenies, Elliptic
curves, Cryptography.
\end{keywords}

\begin{AMS}
11T71, 14K02
\end{AMS}

\section{Introduction}
\label{s:intro}

A map $X\to Y$ of (smooth, projective) algebraic curves over a finite
field $\Fq$ is called an \emph{exceptional cover} when the induced map on
$\F_{q^t}$-points $X(\F_{q^t})\to Y(\F_{q^t})$ is a bijection for
infinitely many values of $t$ (necessarily including $t=1$). The
construction of exceptional covers is an important problem in arithmetic
algebraic geometry~\cite{Fri05}, which also has applications to
cryptography.

One can in particular mention the construction of hash functions with
values in algebraic curves and their Jacobians: while there already is
abundant literature on the subject, the construction techniques
proposed so far have been somewhat ad hoc and
unsystematic~\cite{Tib2011-PhD}, and only
partial results have been obtained for curves of genus
$\geq 2$~\cite{DBLP:conf/pairing/KammererLR10,DBLP:conf/pairing/FouqueT10,%
1310.1013}. A more programmatic approach has been suggested
in~\cite{Tib2013-ECC} based on the observation that, given an
exceptional cover $X\to\Pj^1$ of the projective line over $\Fq$,
one can obtain encodings of elements of $\Fq$ to all curves $Y$ with a
non-constant map $h\colon X\to Y$ simply by composing the bijection
$\Pj^1(\Fq)\to X(\Fq)$ with $h$. The construction of hash functions is
thus reduced to obtaining explicit exceptional covers of the projective
line.

The simplest such exceptional covers are those of genus zero, namely
rational functions $f\in\Fq(x)$ inducing a permutation of
$\Pj^1(\F_{q^t})$ for infinitely many $t$. They are not directly
applicable to hashing (since any curve $Y$ with a non-constant map
$\Pj^1\to Y$ is rational), but they are comparably
well-understood~\cite{Fri94,GMS03} and an interesting first step towards
the general case. Fried~\cite[\S4]{Fri05} also suggested that these
exceptional covers should play an important role in public-key
cryptography.

Indeed, exceptional covers are essentially the same objects
as \emph{permutation rational functions}, {i.e.} rational functions over
$\Fq$ inducing a bijection of $\Fq$ to itself: clearly, an exceptional
cover $f\colon\Pj^1\to\Pj^1$ satisfying $f(\infty)=\infty$ (which can
always be satisfied up to a linear fractional transformation) is a
permutation rational function, and one can show that if the degree of $f$
is small compared to $q$, the converse also
holds~\cite{GTZ07,DBLP:conf/provsec/Tibouchi14}.
In particular, we can see these rational functions as generalizations of
the RSA polynomials $x^e$ with $e$ coprime to $q-1$.

\paragraph{Our contributions.} Based on Fried's modular interpretation of
a large class of exceptional covers $\Pj^1\to\Pj^1$ called exceptional
involution covers~\cite[Cor. 3.5]{Fri94}, we describe an algorithm to
generate permutation rational functions of any constant prime degree
$\ell\geq 5$ (which are, in fact, exceptional covers) over large finite
fields and show that it is efficient and
practical. We also expand upon the RSA analogy alluded to in
\cite{Fri05} and discuss how our algorithm might indeed be used
to obtain new factoring-related trapdoor permutations that behave better
than the RSA trapdoor permutation against certain classes of attacks.

\section{Permutation rational functions from isogenies}
\label{sec:isogenies}

Consider two elliptic curves $E\colon y^2=x^3+ax+b$ and $E'\colon
y^2=x^3+a'x+b'$ over a finite field $\Fq$ of characteristic $\neq2,3$ and an
isogeny $\varphi\colon E\to E'$ defined over $\Fq$. Since $\varphi$
commutes to the involution of multiplication by $-1$, the $x$-coordinate
(resp.{} $y$-coordinate)
of $\varphi(x,y)$ is an even (resp.{} odd) function of $y$. And since
$y^2=x^3+ax+b$, this means that there exist unique
rational functions $u_\varphi,v_\varphi\in\Fq(x)$ such
that $\varphi$ has the form:
\[ \varphi(x,y) = \big( u_\varphi(x), y\cdot v_\varphi(x) \big). \]
This paper is based on the following observation.

\begin{theorem}
\label{th:main}
Let $\varphi\colon E\to E'$ as above be an isogeny defined over
$\Fq$ of prime degree $\ell$. The following conditions are equivalent.
\begin{enumerate}[label={\upshape({\itshape \roman*}\/)},leftmargin=*]
\item $u_\varphi$ has no $\Fq$-rational pole;
\item the kernel $K$ of $\varphi$ satisfies $K(\F_{q^2}) = \{0\}$;
\item $u_\varphi$ is a permutation rational function.
\end{enumerate}
\end{theorem}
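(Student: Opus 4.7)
The plan is to relate all three conditions directly to the kernel $K = \ker\varphi$, using the fact that the poles of $u_\varphi$ are precisely the $x$-coordinates of the nonzero points of $K$: indeed, $\varphi$ sends these points to the identity of $E'$, which is the point at infinity of the Weierstrass model. Observe also that any point of $E$ with $x$-coordinate in $\Fq$ automatically lies in $E(\F_{q^2})$, since its $y$-coordinate is a square root of an element of $\Fq$.

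For (i) $\iff$ (ii), the easy direction is that if $K(\F_{q^2}) = \{0\}$, no nonzero point of $K$ can have $x$-coordinate in $\Fq$, so $u_\varphi$ has no $\Fq$-rational pole. For the converse, suppose $K(\F_{q^2})$ is nontrivial. Since $K$ has prime order $\ell$ and $K(\F_{q^2})$ is a subgroup, it must equal $K$ in full. The $q$-Frobenius $\pi$ then acts on $K$ with $\pi^2 = \id$, so its restriction to $K$ is an element of order dividing $2$ in $\mathrm{Aut}(K) \cong (\Z/\ell\Z)^\times$, namely $\pm 1$. In either case $\pi$ fixes the $x$-coordinate of every point of $K$, which makes (i) fail.

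For (ii) $\Rightarrow$ (iii), condition (i) now follows, so $u_\varphi$ is a well-defined map $\Fq \to \Fq$; since $\Fq$ is finite, it suffices to prove injectivity. If $u_\varphi(x_1) = u_\varphi(x_2)$, lift to points $P_i = (x_i,y_i) \in E(\F_{q^2})$. Then $\varphi(P_1)$ and $\varphi(P_2)$ share an $x$-coordinate, so $\varphi(P_1) = \pm \varphi(P_2)$ and $P_1 \mp P_2 \in K$. But $P_1 \mp P_2$ also lies in $E(\F_{q^2})$, hence in $K(\F_{q^2}) = \{0\}$, so $P_1 = \pm P_2$ and $x_1 = x_2$. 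The remaining implication (iii) $\Rightarrow$ (i) is immediate, since a function $\Fq \to \Fq$ cannot have a pole in $\Fq$.

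The main subtlety is the direction (i) $\Rightarrow$ (ii): everything hinges on the primality of $\ell$ (so that the $\F_{q^2}$-rational part of $K$ is either trivial or all of $K$) together with the Galois-theoretic observation that, once $K \subseteq E(\F_{q^2})$, the only way Frobenius can act on the cyclic $\F_\ell$-module $K$ is by $\pm 1$.
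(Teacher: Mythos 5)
Your proof is correct and follows essentially the same route as the paper: the key step (i)~$\Rightarrow$~(ii) is in both cases the observation that once $K(\F_{q^2})$ is nontrivial it is all of $K$ (by primality of $\ell$) and Frobenius then acts on $K$ as $\pm1$, hence fixes $x$-coordinates; your phrasing via $\mathrm{Aut}(K)\cong(\Z/\ell\Z)^\times$ is just a repackaging of the paper's eigenvector argument. The (ii)~$\Rightarrow$~(iii) injectivity argument is identical to the paper's.
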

\begin{proof}
$\text{\itshape(i)}\Rightarrow\text{\itshape(ii)}$.
Let $P=(x,y)$ be a non-identity element of
$K(\F_{q^2})$. We know that $x$ is a pole of $u_\varphi$ (as $P\in K$)
and will now show that $x\in\Fq$.
First note that $P$ is a generator of $K$ since that group has prime order $\ell$.
As a result, in view of the fact that
$\varphi$ commutes with the Frobenius, $P$ must be an eigenvector
of the Frobenius $F$ of $E$ for some eigenvalue $\lambda$ (when we view
$F$ as a linear endomorphism of the $\F_\ell$-vector space
$E[\ell](\overline{\Fq})$). Moreover,
since $P$ is in $E(\F_{q^2})$, we have $F^2(P) = P$, hence
$\lambda=\pm1$. Therefore, $F(P)=(x,\pm y)$ and in particular $x^q=x$,
that is, $x\in\Fq$. Thus, $x$ is an $\Fq$-rational pole of $u_\varphi$.

$\text{\itshape(ii)}\Rightarrow\text{\itshape(iii)}$.
The rational fraction $u_\varphi$ is a permutation rational function
if and only if it has no rational pole and is injective.
Assuming $K(\F_{q^2}) = \{0\}$, $u_\varphi$ cannot have
a rational pole: indeed, if $x$ were such a pole, a point $P$ on $E$ with that
$x$-coordinate would be defined over $\F_{q^2}$ and satisfy
$\varphi(P)=0$. Suppose now that there exist $x,x'\in\Fq$ such that $u_\varphi(x)=u_\varphi(x')$.
Take two points $P,P'\in E(\F_{q^2})$ having these values
as their respective $x$-coordinates. Since
$u_\varphi(x)=u_\varphi(x')$, we have $\varphi(P)=\pm\varphi(P')$, and so
$P\mp P'\in K(\F_{q^2}) = \{0\}$. This implies that $P=\pm P'$ and thus $x=x'$.
Therefore $u_\varphi$ is injective.

$\text{\itshape(iii)}\Rightarrow\text{\itshape(i)}$ is clear.
\qed
\end{proof}

We note that the condition in the theorem can only be satisfied for $\ell\geq 5$.
Indeed, since the kernel of $\varphi$ consists of $\ell$ points including
the point at infinity, the denominator of $u_\varphi$ is of degree
$\ell-1$. In particular, for $\ell=2$, it is linear and thus does have a
rational root. Moreover, for $\ell$ odd, the non-zero kernel points come
in pairs $\{\pm P\}$ of distinct points with the same $x$-coordinate, so
the denominator of $u_\varphi$ is actually the square of a
polynomial of degree $(\ell-1)/2$. This again implies that
$u_\varphi$ has a rational pole for $\ell=3$. On the other hand, we will
be able to construct examples of the situation in the theorem for any
$\ell\geq 5$.

Note also that under the conditions of the theorem, $u_\varphi$ is in
fact an exceptional cover $\Pj^1\to\Pj^1$. This follows from the fact
that $K(\F_{q^{2t}})$ remains trivial for any $t$ coprime to the
degree of the finite extension of $\Fq$ over which the points of $K$ are
defined.

The above theorem enables us to efficiently construct permutation rational
functions of given prime degree $\ell\geq 5$ over prescribed finite fields
$\Fq$ of cryptographic size. To do so, we proceed as follows.

\section{Computing isogeny kernels}

As before, let $E$ be an elliptic curve defined over a finite field $\Fq$.
Denote by $P$ a point of prime order $\ell$ and by $K$ the subgroup it
generates. The isogeny $E\to E/K$ satisfies the conditions of
Theorem~\ref{th:main} if and only if $K$ is rational and $K(\F_{q^2})=\{0\}$.
The second condition is easy to test: since $\ell$ is prime, all nontrivial
points of $K$ generate all others; it thus suffices to verify that $P$ is not
defined over $\F_{q^2}$. To efficiently test whether $K$ is rational we use the
following criterion.

\begin{lemma}
Let $P$ be a point of prime order $\ell$ on $E$. Denote by $d$ the degree of
the field extension $\Fq(x_P)/\Fq$. Let $\tau$ be an integer of order exactly
$d$ in $\Fl^\times/\{\pm 1\}$. The subgroup $K$ generated by $P$ is rational if
and only $x_{[\tau]P}$ is a Galois conjugate of $x_P$.
\end{lemma}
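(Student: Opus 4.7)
The first step is to translate the rationality condition into a statement about the Frobenius action on $E[\ell]$. Letting $F$ denote the $q$-power Frobenius, viewed as an $\Fl$-linear endomorphism of $E[\ell](\Fqb)$, the fact that $K$ has prime order means $K$ is rational if and only if $F(P) = [\lambda]P$ for some $\lambda \in \Fl^\times$, i.e., iff $P$ is an eigenvector of $F$. Moreover, $d = [\Fq(x_P):\Fq]$ coincides with the orbit size of $x_P$ under $F$, equivalently with the least $k \geq 1$ such that $F^k(P) = \pm P$.

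For the forward implication, I would assume $F(P) = [\lambda]P$ and observe that $F^d(P) = \pm P$ pins the order of $\lambda$ in $\Fl^\times/\{\pm 1\}$ down to exactly $d$. Since that quotient group is cyclic (of order $(\ell-1)/2$), its unique subgroup of order $d$ contains both $\lambda$ and $\tau$; hence $\tau \equiv \pm\lambda^j \pmod{\ell}$ for some $j$, and substituting yields $x_{[\tau]P} = x_{F^j(P)} = F^j(x_P)$, a Galois conjugate of $x_P$.

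The converse is the harder direction. Starting from $x_{[\tau]P} = F^j(x_P)$, I would unpack this as $F^j(P) = \pm[\tau]P \in K$, then introduce $s$, the least positive integer with $F^s(P) \in K$. A standard Euclidean-division argument shows that $s$ divides every such exponent, in particular $s\mid j$ and $s\mid d$. Writing $F^s(P)=[\mu]P$ and iterating gives $F^{ks}(P)=[\mu^k]P$, whence minimality of $d$ as the orbit size of $x_P$ forces $\mu$ to have order exactly $d/s$ in $\Fl^\times/\{\pm 1\}$. The conclusion then follows from an order comparison: $[\mu^{j/s}]P = \pm[\tau]P$ shows that the image of $\mu^{j/s}$ in the quotient has order $d$, yet this order must also divide the order $d/s$ of $\mu$, forcing $s=1$. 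I expect this order bookkeeping to be the main obstacle, and it underscores why $\tau$ must be of order \emph{exactly} $d$: an element of smaller order would fail to rule out the intermediate values $s>1$.
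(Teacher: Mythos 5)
Your proof is correct and follows essentially the same route as the paper's: your integer $s$ (the least exponent with $F^s(P)\in K$) is exactly the paper's $e$, the degree of the field of definition of $K$, and both arguments reduce to comparing the order $d$ of $\tau$ in $\Fl^\times/\{\pm 1\}$ with the order $d/s$ of the element by which the appropriate power of Frobenius acts on $K$. If anything, your version spells out the divisibility bookkeeping more explicitly than the paper, which packages the same computation as a statement about Galois orbits on the set of $x$-coordinates viewed as a principal homogeneous space for $\Fl^\times/\{\pm 1\}$.
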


\begin{proof}
The subgroup $K$ is stable under the involution of multiplication by $-1$ and
is thus completely determined by the set $H$ of $x$-coordinates of its
nontrivial points, which forms a principal homogeneous space for
$\Fl^\times/\{\pm 1\}$ where $\lambda\in\Fl^\times$ acts by $x_P\mapsto
x_{[\lambda]P}$.

The Frobenius automorphism $\pi$ of $\Fqd/\Fq$ stabilizes $H$ and thus acts as
an element $\lambda_\pi$. Now let $e$ denotes the degree of the smallest
extension of $\F_q$ over which $H$ is defined. Note that
$\F_{q^e}$ is also the field of definition of $K$ since this subgroup has odd order.
Then $\lambda_\pi$ is of order
exactly $d/e$. Therefore, the group $\Gal(\Fqd/\Fq)$ embeds in
$\Fl^\times/\{\pm 1\}$ as $\mu_{d/e}$ and its action partitions $H$ into orbits
of length $d/e$. The stabilizer of $P$'s orbit is then $\mu_{d/e}$. In
particular, $e=1$ if and only if $x_{[\tau]P}$ lies in the same orbit as $x_P$.
\qed
\end{proof}

To make the above criterion explicit, recall that multiplication-by-$k$ is an
algebraic map on $E$
\[
P=(x,y)\longmapsto
[k]P=\left(\frac{\phi_k(x)}{\psi_k(x)^2},\frac{\omega_k(x,y)}{\psi_k(x)^3}\right)
\]
where the polynomials $\phi_k$, $\psi_k$, and $\omega_k$ are efficiently
computable. It follows that $x$-coordinates of $\ell$-torsion points are roots
of the so-called $\ell$-division polynomial $\psi_\ell(x)$.
If $f(x)$ is a degree-$d$ irreducible factor of $\psi_\ell(x)$, we can test
whether its roots are the $x$-coordinates of points $P$ such that $f(x_{[\tau]P})=0$
by checking whether
\[
f\left(\frac{\phi_\tau(x)}{\psi_\tau(x)^2}\right)=0\bmod f(x).
\]
In that case, the $x$-coordinates of other points of $K$ are obtained through
the map $x_P\mapsto x_{[\rho]P}$ for $\rho$ in $\Fl^\times/\{\pm 1\}/\mu_d$. We
can compute them as the roots of
\[
\gcd\left(\psi_\ell(x), g_\rho(x)\right)
\qquad
\text{where}
\qquad
g_\rho(x)=\res_y\left(f(y), \phi_\rho(x)-\psi_\rho(x)^2 y\right).
\]
Indeed, for each $P\in K$ there
are $\rho^2$ points $Q$ such that $[\rho]Q=P$, only one of which lies in $K$;
the others have order $\ell\rho$ and are eliminated by taking the $\gcd$ with
$\psi_\ell(x)$.

Note that the above is unnecessary for $d=\frac{\ell-1}{2}$. In that case, the
condition of the lemma always holds and degree-$d$ irreducible factors of
$\psi_\ell(x)$ are directly kernel polynomials of rational degree-$\ell$
isogenies.

Putting the above together we obtain Algorithm~\ref{alg:kernels}.

\renewcommand{\algorithmicrequire}{\textbf{Input:}}
\renewcommand{\algorithmicensure}{\textbf{Output:}}
\begin{algorithm}[t]
\begin{algorithmic}[1]
  \Require an elliptic curve $E/\Fq$ with $q=p^\alpha$, $p\neq 2,3$, and a prime $\ell\notin\{2,3,p\}$.
  \Ensure the list $\mathscr{L}$ of all kernel polynomials of $\ell$-isogenies satisfying the conditions of Theorem~\ref{th:main}.

  \State compute the $\ell$-division polynomial $\psi_\ell(x)$ of $E/\Fq$.
  \State let $\omega$ denote a generator of $\Fl^\times$.
  \State initialize $\mathscr{L}$ to the empty list.
  \For{each positive divisor $d$ of $\frac{\ell-1}{2}$ other than $1$}
    \State compute the set $\mathscr{F}$ of degree-$d$ irreducible factors of $\psi_\ell(x)$.
    \State let $\tau$ be the smallest positive integer such that
    $\tau\equiv\pm\omega^{\frac{\ell-1}{2d}}\bmod \ell$.
    \State using a remainder tree, compute $a_f =
           \phi_\tau(x)/\psi_\tau(x)^2 \bmod f(x)$ for each
           $f\in\mathscr{F}$.
    \While{$\mathscr{F}$ is not empty}
      \State remove the first element from $\mathscr{F}$ and call it $f$.
      \If{$f(a_f)\neq0\bmod f(x)$}
        \State \textbf{continue} to the next iteration.
      \EndIf
      \State let $k\leftarrow f$.
      \For{$m=1$ to $\frac{\ell-1}{2d}-1$}
        \State let $\rho$ be the smallest positive integer such that
	       $\rho\equiv\pm\omega^m\bmod\ell$.
        \State let $g(x)\leftarrow\res_y\left(f(y), \phi_\rho(x)-\psi_\rho(x)^2 y\right)$.
        \State let $g(x)\leftarrow \monic\left(\gcd(\psi_\ell(x),g(x))\right)$.
	\State let $k\gets g\cdot k$.
	\State remove $g(x)$ from $\mathscr{F}$.
      \EndFor
      \State add $k(x)$ to the list $\mathscr{L}$.
    \EndWhile
  \EndFor
  \State \textbf{return} $\mathscr{L}$.
\end{algorithmic}
\caption{Compute kernel polynomials satisfying Theorem~\ref{th:main}.}
\label{alg:kernels}
\end{algorithm}

\begin{theorem}
Algorithm~\ref{alg:kernels} has a probabilistic running time of $\widetilde{O}(\ell^{4+\epsilon}\log(q)^2)$.
\end{theorem}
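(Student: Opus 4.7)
The plan is to bound the cost of each step of Algorithm~\ref{alg:kernels} in $\Fq$-operations using fast polynomial arithmetic (Cantor--Kaltofen multiplication, Cantor--Zassenhaus equal-degree factorization, Strassen's remainder tree, and Kedlaya--Umans modular composition), and then convert to bit operations by multiplying by $\widetilde{O}(\log q)$. The probabilistic qualifier in the statement is inherited from the randomized equal-degree factorization subroutine used in Step~5.

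First I would dispose of the cheap steps. Step~1 computes the $\ell$-division polynomial (of degree $(\ell^2-1)/2$) from its classical recurrence in $\widetilde{O}(\ell^2)$ $\Fq$-operations. In Step~5, for a fixed divisor $d$ of $(\ell-1)/2$, extracting the degree-$d$ irreducible factors of $\psi_\ell$ reduces to computing $x^{q^d}\bmod\psi_\ell$ via $O(d\log q)$ modular squarings at $\widetilde{O}(\ell^2)$ each, followed by a gcd and one round of equal-degree splitting; this totals $\widetilde{O}(d\,\ell^2\log q)$ $\Fq$-operations, and summing over the $O(\log\ell)$ divisors of $(\ell-1)/2$ gives at most $\widetilde{O}(\ell^3\log q)$. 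The eigenvalue test of Steps~7 and~10 then contributes only $\widetilde{O}(\ell^{2+\epsilon})$: the auxiliary polynomials $\phi_\tau,\psi_\tau$ (of degree $O(\ell^2)$) are computed once in $\widetilde{O}(\ell^2)$, batch-reduced against all $f\in\mathscr{F}$ via a remainder tree in $\widetilde{O}(\ell^2)$, and each check $f(a_f)\bmod f$ is a Kedlaya--Umans modular composition of quasi-linear cost in $d$.

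The dominant cost, and the main obstacle in the analysis, is the reconstruction loop of Steps~13--18. The raw resultant $g_\rho(x)=\psi_\rho(x)^{2d}f\!\left(\phi_\rho(x)/\psi_\rho(x)^2\right)$ has a priori degree $d\rho^2$, which can reach $\ell^3$; computing it blindly would overshoot the target bound. I would instead work modulo $\psi_\ell$ from the outset, which is legitimate because $\gcd(\rho,\ell)=1$ forces $\psi_\rho$ and $\psi_\ell$ to be coprime (their roots being the $x$-coordinates of disjoint torsion groups), so $\psi_\rho^2$ is invertible modulo $\psi_\ell$ and $\gcd(\psi_\ell,g_\rho)=\gcd\!\big(\psi_\ell,\,f(\phi_\rho/\psi_\rho^2)\bmod\psi_\ell\big)$. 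Each pair $(f,\rho)$ then costs $\widetilde{O}(\ell^{2+\epsilon})$ $\Fq$-operations for the two reductions, the inversion, the modular composition, and the final gcd with $\psi_\ell$. A combinatorial argument bounds the total number of $(f,\rho)$ pairs by $O(\ell^2)$: the polynomial $\psi_\ell$ contains at most $\ell+1$ kernel polynomials of degree $(\ell-1)/2$, and a kernel whose factors have degree $d\geq 2$ contributes only $(\ell-1)/(2d)-1\leq\ell/d$ pairs to the loop, so the sum telescopes. Putting everything together yields $\widetilde{O}(\ell^{4+\epsilon})$ $\Fq$-operations for the reconstruction loop; combined with the Frobenius cost $\widetilde{O}(\ell^3\log q)$ from Step~5 and a further factor $\widetilde{O}(\log q)$ for $\Fq$-arithmetic in bits, the overall running time is $\widetilde{O}(\ell^{4+\epsilon}\log^2 q)$, as claimed.
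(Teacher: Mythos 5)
Your proposal is correct and reaches the stated bound, and its overall decomposition (division polynomial, Cantor--Zassenhaus extraction of the degree-$d$ factors for each divisor $d$ of $\frac{\ell-1}{2}$, remainder-tree eigenvalue test, and a quartic bound on the reconstruction loop) matches the paper's; the genuine difference is in how you handle the dominant Steps~16--17. The paper forms the resultant $g_\rho$ directly --- since $\phi_\rho(x)-\psi_\rho(x)^2y$ is linear in $y$, this is just the cleared-denominator composition $\psi_\rho^{2d}f(\phi_\rho/\psi_\rho^2)$, computable in $\widetilde{O}(d\rho^2)=\widetilde{O}(d\ell^2)$ field operations --- and observes that Steps~16--17 execute at most once per element of $\mathscr{F}$, i.e.\ $O(\ell^2/d)$ times, giving $\widetilde{O}(\ell^4)$ operations overall. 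So your assertion that the direct computation ``would overshoot the target bound'' is mistaken: the fact that $\deg g_\rho$ can reach $\ell^3$ is compatible with the budget because the per-pair cost $\widetilde{O}(d\ell^2)$ and the pair count $O(\ell^2/d)$ trade off against $d$. Your alternative --- inverting $\psi_\rho^2$ modulo $\psi_\ell$ (legitimate, as you note, because $\gcd(\rho,\ell)=1$ makes the two division polynomials coprime) and using fast modular composition --- is a valid and per-pair cheaper route to the same $\widetilde{O}(\ell^{4+\epsilon})$ total, and your pair count via the at most $\ell+1$ cyclic subgroups of order $\ell$ is an equally valid substitute for the paper's count via $|\mathscr{F}|$. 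One small correction: the number of divisors of $\frac{\ell-1}{2}$ is not $O(\log\ell)$ in general but only $O(\ell^\epsilon)$ (the bound the paper cites from Apostol); your Step-5 estimate survives because $\sum_{d\mid n}d=O(n\log\log n)$, but the iteration count of the outer loop should be stated as $O(\ell^\epsilon)$.
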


\begin{proof}
Step~1 computes the division polynomial $\psi_\ell(x)\in\Fq[x]$ which is of
degree $\frac{\ell^2-1}{2}$. Using the formulas of \cite[page 200]{weber-cm}
and an asymptotically fast method for polynomial multiplication, this takes
quasi linear time in the output: $O(\ell^2\log(q))$.

The loop from Step~4 runs a maximum of $O(\ell^\epsilon)$ iterations, for any
$\epsilon>0$, thanks to the bound on the number of divisors from \cite[Theorem
13.12]{apostol}.

In order to find the irreducible factors of degree $d<\frac{\ell-1}{2}$ of
$\psi_\ell(x)$ we use ideas from the Cantor--Zassenhaus algorithm: first we evaluate
\[
r(x)=\frac{\gcd\big(\psi_\ell(x),x^{q^d}-x\big)}{\gcd\big(\psi_\ell(x),x^{q}-x\big)}
\]
which is the product of all such factors; then we isolate those factors by
iteratively splitting $r(x)$ as
\[
r(x)=g(x)\cdot\frac{r(x)}{g(x)}
\quad
\text{where}
\quad
g(x)=\gcd\left(r(x),h(x)^{\frac{q^d-1}{2}}\right)
\]
with $h(x)$ drawn uniformly at random from $\Fq[x]/(r(x))$. Evaluating such
expressions boils down to computing $O(q^\ell)$-powers in $\Fq[x]/(r(x))$
which, since $r(x)$ has degree $O(\ell^2)$, gives an asymptotic
complexity of $\widetilde{O}(\ell^3\log(q)^2)$ for Step~5.

Step~7 takes at most $\ell^{2+\epsilon}$ elementary operations in $\F_q$
using \cite{mobo72}.

Both Step~16 and 17 use $O(d\ell^2)$ operations over $\F_q$. They run at most
once per element $f\in\mathscr{F}$, which gives an overall contribution of
$\widetilde{O}(\ell^4\log(q))$ to the running time. This dominates the loop
from Step~8 to 22.
\qed
\end{proof}

\section{Computing permutation rational functions}

We now turn to our main algorithm.
First recall that isomorphism classes of elliptic curves can be uniquely
identified by their $j$-invariant. Under this map, pairs of $\ell$-isogenous
elliptic curves $(E,E')$ are completely characterized by the equality
$\Phi_\ell(j(E),j(E'))=0$ where $\Phi_\ell(X,Y)$ denotes the $\ell$-modular
polynomial. Thus, to select a rational $\ell$-isogeny $E\to E'$, we simply draw
$j(E)$ uniformly at random from $\Fq$ until $\Phi_\ell(X,j(E))$ has a root.

For all suitable kernel polynomials $f(x)$ found by Algorithm~\ref{alg:kernels},
we output the corresponding
isogeny map derived using Kohel's formula \cite[Section 2.4]{kohel-phd}.
This gives Algorithm~\ref{alg:friedpoly}.

\renewcommand{\algorithmicrequire}{\textbf{Input:}}
\renewcommand{\algorithmicensure}{\textbf{Output:}}
\begin{algorithm}[t]
\begin{algorithmic}[1]
  \Require a prime power $q=p^\alpha$ with $p\neq 2,3$ and a prime $\ell\notin\{2,3,p\}$.
  \Ensure permutation rational functions of degree $\ell$ over $\Fq$.

  \State compute the reduction to $\Fq[X]$ of the $\ell$-modular polynomial $\Phi_\ell(X,Y)$.
  \Loop
    \Repeat
      \State draw an element $j\in\Fq$ uniformly at random.
    \Until{the polynomial $\Phi_\ell(X,j)$ has at least one root.}
    \State let $E/\Fq$ denote an elliptic curve with $j$-invariant $j$.
    \For{each polynomial $f(x)$ output by Algorithm~\ref{alg:kernels}}
      \State compute the isogeny $\varphi$ with kernel polynomial $f(x)$ using Kohel's formula.
      \State \textbf{return} its $x$-coordinate map $u_\varphi$.
    \EndFor
  \EndLoop
\end{algorithmic}
\caption{Compute permutation rational functions.}
\label{alg:friedpoly}
\end{algorithm}

\begin{theorem}
Heuristically, Algorithm~\ref{alg:friedpoly} runs in $\widetilde{O}(\ell^{4+\epsilon}\log(q)^2)$ time.
\end{theorem}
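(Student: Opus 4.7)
My proof plan is to estimate the cost of each step of Algorithm~\ref{alg:friedpoly} together with the expected number of iterations of its two loops, and to verify that everything is dominated by the call to Algorithm~\ref{alg:kernels}.

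Step~1 is a one-time precomputation of the bivariate modular polynomial $\Phi_\ell(X,Y)\in\Fq[X,Y]$. Its $O(\ell^2)$ coefficients can be obtained in $\widetilde{O}(\ell^3+\ell^2\log q)$ bit operations using Sutherland's CRT algorithm, which is negligible compared to the target bound.

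I would then analyse a single execution of the outer Loop. In the repeat--until block, drawing $j\in\Fq$ is free and testing whether $\Phi_\ell(X,j)\in\Fq[X]$, a polynomial of degree $\ell+1$, has an $\Fq$-rational root reduces to computing $\gcd(\Phi_\ell(X,j),X^q-X)$, which takes $\widetilde{O}(\ell\log q)$ operations in $\Fq$ via fast modular exponentiation. The first heuristic ingredient is that a constant fraction of $j$-invariants give curves admitting a rational $\ell$-isogeny --- equivalently, whose Frobenius has an eigenvalue in $\Fl$ on $E[\ell]$ --- so that this loop terminates in $O(1)$ expected trials. Constructing $E$ from $j$ costs $O(1)$ field operations, the call to Algorithm~\ref{alg:kernels} costs $\widetilde{O}(\ell^{4+\epsilon}\log(q)^2)$ and dominates every other contribution, and Kohel's formula then produces each $u_\varphi$ from its kernel polynomial in $\widetilde{O}(\ell)$ further operations.

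It remains to count how many iterations of the outer Loop are needed before a $u_\varphi$ is actually returned. An iteration fails exactly when Algorithm~\ref{alg:kernels} returns the empty list, that is, when every rational $\ell$-subgroup of $E[\ell]$ has Frobenius eigenvalue $\pm1$; by Theorem~\ref{th:main} such kernels do not produce permutation rational functions. Since only $2$ of the $\ell-1$ nonzero residues modulo $\ell$ are $\pm1$, a second heuristic equidistribution argument on Frobenius eigenvalues (as $j$ varies over $\Fq$) shows that success occurs with probability bounded away from~$0$ uniformly in $\ell\geq5$, yielding $O(1)$ expected outer iterations. Multiplying all contributions yields the announced $\widetilde{O}(\ell^{4+\epsilon}\log(q)^2)$ bound.

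The hard part, and the reason the statement is qualified as heuristic, is rigorously justifying the two equidistribution claims about Frobenius eigenvalues on $E[\ell]$ as $j$ ranges over $\Fq$; rather than attempting an unconditional proof I would simply appeal to Chebotarev-type heuristics on the modular curves $X_0(\ell)$ and $X(\ell)$, which are standard in this context.
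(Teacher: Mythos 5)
Your proposal is correct and follows essentially the same route as the paper: bound each step, observe that the call to Algorithm~\ref{alg:kernels} dominates, and invoke two density heuristics (existence of a rational $\ell$-isogeny, and non-$(\pm1)$ Frobenius eigenvalue on its kernel) to get $O(1)$ expected iterations. The only difference is that the paper quantifies these densities explicitly --- $\frac{\ell-1}{2\ell}$ via the isogeny-volcano structure and $1-\frac{2}{\ell-1}$ via Chebotarev applied to the cover $X\big(\Gamma_1'(\ell)\big)\to X_0(\ell)$, giving an overall success probability $\frac{\ell-3}{2\ell}$ per iteration --- where you only assert they are bounded away from zero, which suffices for the stated bound.
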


\begin{proof}
Step~1 uses the method of \cite{drew-modpol} to compute the modular
polynomial in $\widetilde{O}(\ell^3\log(q))$ operations. For Step~7
we refer to Algorithm~\ref{alg:kernels}.
Finally, the complexity of Step~8 is quasi linear in its input.

To conclude the proof, we only need to show that the average number of loop iterations
is bounded. First consider the innermost loop. The probability that a random
$j$-invariant satisfies the condition in Step~5 is exactly that of the
corresponding elliptic curve $E$ having a rational degree-$\ell$ isogeny.
Assuming the curve is ordinary, which only disregards finitely many
$j$-invariants, the so-called volcano structure 
\cite{kohel-phd,fouquet-morain} implies that, if the discriminant $\Delta(E)$ is a
nonzero square modulo $\ell$, then $E$ has $\ell$-maximal endomorphism ring and
is connected to other such curves by a cycle of $\ell$-isogenies.
Therefore the probability that we so obtain a rational isogeny is
$\frac{\ell-1}{2\ell}$ under the heuristic assumption that $\Delta(E)$ behaves modulo $\ell$ as a
random integer.

The outermost loop is executed as many times as we require rational isogenies
before the kernel of one admits no nontrivial rational point. Recall that the
modular curves $X_0(\ell)$ and $X_1(\ell)$ essentially parametrize
pairs $(E,K)$ where $K$ is an order-$\ell$ subgroup of the elliptic curve $E$,
and pairs $(E,P)$ where $P$ is an order-$\ell$ point of $E$,
respectively. Similarly, one can consider the modular curve associated with the congruence
subgroup
\[ \Gamma_1'(\ell) = \left\{ \begin{bmatrix} a & b \\ c & d \end{bmatrix} \equiv
   \begin{bmatrix} \pm1 & \ast \\ 0 & \ast \end{bmatrix}\pmod\ell \right\} \]
whose associated modular curve parametrizes pairs $(E, \pm P)$ consisting of an
elliptic curve and a point of order $\ell$ up to sign. The natural map $X_1(\ell) \to 
X\big(\Gamma'_1(\ell)\big)$ is actually an isomorphism, and 
since $X_1(\ell)\to X_0(\ell)$ is a cyclic Galois cover of degree
$\frac{\ell-1}{2}$, Chebotarev's density theorem shows that the image of $X\big(\Gamma'_1(\ell)\big)(\Fq)\to
X_0(\ell)(\Fq)$ has density $\frac{2}{\ell-1}+O(q^{-1/2})$.
Thus, if $(E,K)$ is uniformly distributed, the probability
that all points $P$ of $K$ are defined over $\F_{q^2}$ (which is equivalent to saying that $\{\pm P\}$ is defined
over $\Fq$) converges to $\frac{2}{\ell-1}$. In
particular, if $E$ admits a rational subgroup $K$ of order $\ell$,
the probability that one such subgroup does not have nontrivial $\F_{q^2}$-points
is asymptotically $1-\frac{2}{\ell-1}$.

We conclude that the overall success probability of an iteration is at
least $\frac{\ell-1}{2\ell}\big(1-\frac{2}{\ell-1}\big) =
\frac{\ell-3}{2\ell}$ up to the $O(q^{-1/2})$ error term,
hence the expected number of
iterations is less than about $\frac{2\ell}{\ell-3} \leq 5$.
\qed
\end{proof}

As an example, take $q=127$ and $\ell=13$.
For $j=60$ we find that $\Phi_\ell(X,j)$ has two roots in $\Fq$.
Thus, any elliptic curve $E$ with $j(E)=60$ is the domain of two
rational degree-$\ell$ isogenies; we take $E:y^2=x^3+25x+58$,
of which the $\ell$-division polynomial factors as
$\psi_\ell(x)=f_1(x)f_2(x)f_3(x)Q(x)$ with
\begin{align*}
f_1(x) &= x^3 + 88 x^2 + 60 x + 59, \\
f_2(x) &= x^3 + 91 x^2 + 14 x + 57, \\
f_3(x) &= x^6 + 36 x^5 + 17 x^4 + 73 x^3 + 88 x^2 + 11 x + 31,
\end{align*}
and $Q(x)$ is the product of six irreducible polynomials of degree twelve.

Since $f_3(x)$ has degree $\frac{\ell-1}{2}$, it is the kernel polynomial of a
degree-$\ell$ isogeny with no rational kernel point. The $x$-coordinate map of
that isogeny provides a first permutation rational fraction:
\begin{align*}
x\longmapsto\big(
	x^{13} + 72 x^{12} + 84 x^{11} + 72 x^{10} + 2 x^9 + 15 x^8 + 91 x^7 & \\
	+ 94 x^6 + 4 x^5 + 66 x^4 + 17 x^3 + 49 x^2 + 48 x + 53 &
\big)\big/f_3(x)^2.
\end{align*}

Now consider $f_1(x)$. We take $\omega=2$ as a generator of $\Fl^\times$ and
deduce $\tau=4$. The condition
$f_1\left(\frac{\phi_\tau(x)}{\psi_\tau(x)^2}\right)=0\bmod f_1(x)$ holds and
therefore $f_1(x)$ is a factor of the kernel polynomial of a rational
degree-$\ell$ isogeny. We compute the other factor as Step~15--17 in
Algorithm~\ref{alg:kernels} with $m=1$ and find $g(x)=f_2(x)$. We then compute
the isogeny with kernel polynomial $f_1(x)f_2(x)$ and obtain a second permutation rational fraction
as its $x$-coordinate map:
\begin{align*}
x\longmapsto\big(
	x^{13} + 67 x^{12} + 13 x^{11} + 61 x^{10} + 83 x^9 + 50 x^8 + 49 x^7 & \\
	+ 80 x^6 + 75 x^5 + 88 x^4 + 7 x^3 + 41 x^2 + 38 x + 7 &
\big)\big/\big(f_1(x)f_2(x)\big)^2.
\end{align*}

Table~\ref{table:timings} reports on running times for a simple PARI/GP
\cite{pari} implementation of Algorithm~\ref{alg:friedpoly} on a single core of
an Intel Xeon E3-1275 CPU.

Additionally, Table~\ref{table:density} gives the average density of computed
kernel polynomials which are not irreducible, that is, for which
$d<\frac{\ell-1}{2}$. It shows that, for certain values of $\ell$, although the
special case of $d=\frac{\ell-1}{2}$ (that is, irreducible kernel polynomials) greatly simplifies our
algorithms, it significantly restricts the range of permutation rational
fractions found. Note that a density of zero is expected for $\ell=23$ and
$\ell=59$ since in that case $\frac{\ell-1}{2}$ is exactly twice a prime number
$p$, so all isogenies of degree $d=2,p$ have rational kernel points.

\begin{table}
\begin{center}
\begin{tabular}{|l||r|r|r|r|}
\hline
$q=$ & $2^{127}-1$ & $2^{255}-19$ & $2^{511}-187$ & $2^{1023}-361$
\\
\hline
$\ell=13$ & 0.13 & 0.24 & 0.57 & 2.53 \\
$\ell=23$ & 0.68 & 1.28 & 2.98 & 9.81 \\
$\ell=37$ & 3.25 & 5.99 & 15.48 & 43.14 \\
$\ell=59$ & 21.38 & 35.02 & 89.05 & 227.20 \\
\hline
\end{tabular}
\end{center}
\caption{Average running time in seconds for Algorithm~\ref{alg:friedpoly}.}
\label{table:timings}
\end{table}

\begin{table}
\begin{center}
\begin{tabular}{|l||r|r|r|r|}
\hline
$q=$ & $2^{127}-1$ & $2^{255}-19$ & $2^{511}-187$ & $2^{1023}-361$
\\
\hline
$\ell=13$ & 0.55 & 0.40 & 0.55 & 0.50 \\
$\ell=23$ & --- & --- & --- & --- \\
$\ell=37$ & 0.70 & 0.80 & 0.55 & 0.60 \\
$\ell=59$ & --- & --- & --- & --- \\
\hline
\end{tabular}
\end{center}
\caption{Density of computed kernel polynomials with $d<\frac{\ell-1}{2}$.}
\label{table:density}
\end{table}

\section{A family of candidate trapdoor permutations}
\label{sec:trapdoor}

Using the algorithm of the previous section, one can obtain a permutation
rational function analogue of the RSA trapdoor permutation. Indeed,
consider an RSA modulus $N=p\cdot q$. With the knowledge of the
factorization of $N$, one can efficiently generate permutation rational
functions $u = a/b \in \Fp(x)$ and $v=c/d \in \Fq(x)$ of the same prime
degree $\ell$, and use the Chinese Remainder Theorem to deduce
polynomials $r,s\in \Z[x]$ of degree at most $\ell$ with coefficients in
$(-N/2,N/2)$ such that $u=r/s\bmod p$ and $v=r/s\bmod q$.

The function $x\mapsto r(x)/s(x) \bmod N$ is then a permutation of
$\Z/N\Z$ which is easy to invert with the knowledge of the factorization
of $N$ (simply reduce modulo $p$ and $q$ and use an algorithm like
Berlekamp or Cantor--Zassenhaus to invert $u$ and $v$). However, it seems
hard to invert it otherwise.

This construction is somewhat less efficient in terms of public key size
and evaluation efficiency than the RSA trapdoor permutation, but it seems
to resist certain types of attacks better: for example, there are no
obvious malleability properties, which should thwart most types of
blinding attacks or related message attacks~\cite{Bon99}.

On the other hand, the security analysis is not entirely straightforward.
Publishing $r$ and $s$ could reveal some information on the factorization
of $N$, since its factors belong to the (presumably sparse!) set of
primes $p_0$ such that $\lambda r + \mu s$ has exactly one root modulo $p_0$
for all integers $\lambda,\mu$, $\lambda$ coprime to $N$. For example, if
many values of $(\lambda,\mu)$ provided congruence conditions on $p_0$,
one might be able to recover $p$ and $q$ using the Chinese Remainder
Theorem. In practice, however, the polynomial $\lambda r + \mu s\in\Z[x]$ will
typically have Galois group $S_\ell$, and so one presumably cannot hope to obtain a
really effective description of the set of primes at which it has a root.

\section{Conclusion and open problems}

We have seen that generating permutation rational functions, or
exceptional covers of genus zero of the projective line, could be done
quite practically using elliptic curve isogenies, even over finite fields
of cryptographic size. The covers we obtain with our algorithms are (up
to conjugation by linear fractional transformations) exactly the
exceptional involution covers defined by Fried in~\cite[\S3.2]{Fri94}.
Since the classification of genus-zero exceptional covers of the
projective line has been given by Guralnick {et al.}~\cite{GMS03}, one
could ask how to effectively generate permutation rational functions from
the remaining families.

Perhaps more importantly, one important open question related to this
work is the construction of higher-genus exceptional covers of the
projective line. At least for covers with dihedral monodromy, Fried
mentions an interpretation in terms of moduli spaces of higher-genus
hyperelliptic curves which may lead to a similar algorithm using
isogenies of higher-dimensional abelian varieties.

Finally, an intriguing, if somewhat theoretical, question is the proper
security analysis of the trapdoor permutation described in
\S\ref{sec:trapdoor}.

\label{sec:concl}

%
\clearpage

\end{document}